\newtheorem{thm}{Theorem} [section]
\theoremstyle{definition}
\newtheorem{rem}[thm]{Remark}
\theoremstyle{plain}
\newtheorem{lem}[thm]{Lemma}
\numberwithin{equation}{section}
\newcommand{\al}{\alpha}
\newcommand{\del}{\delta}
\newcommand{\ep}{\varepsilon}
\newcommand{\g}{\mathfrak{g}}
\newcommand{\gl}{\mathfrak{gl}}
\newcommand{\ka}{\zeta}
\newcommand{\la}{\lambda}
\newcommand{\osp}{\text{osp}}
\newcommand{\OO}{\mathcal O}
\newcommand{\Z}{\mathbb Z}
\title{Singular vector formulas for Verma Modules of simple Lie Superalgebras}
\author[Thomas Sale]{Thomas Sale}
\address{Department of Mathematics, University of Virginia, Charlottesville, VA 22904}
\email{tws2mb@virginia.edu}
\keywords{Simple Lie superalgebras, Verma modules, singular vectors.}
\subjclass[2010]{Primary 17B10}
\begin{document}

\maketitle

\begin{abstract}
For a simple Lie superalgebra of type BDFG, we give explicit formulas for singular vectors in a Verma module of highest weight $\lambda - \rho$, which have weight $s_{\gamma}\lambda - \rho$ for certain positive non-isotropic roots $\gamma.$ This implies the existence of a nonzero homomorphism between the corresponding Verma modules.
\end{abstract}

\maketitle


\section{Introduction}
Let $ \g$ be a complex semisimple Lie algebra with a fixed simple system and $M(\la)$ the Verma module of highest weight $\la - \rho,$ where $\rho$ is the Weyl vector. Denote by $s_{\gamma}$ reflection about the positive root $\gamma.$ Suppose that $s_{\gamma}\la \leq \la.$ Then, Verma's theorem (cf. \cite{V}) states that  there is a nonzero homomorphism $M(s_{\gamma}\la) \rightarrow M(\la)$. Since the fundamental work of Verma (cf. \cite{V}) and Bernstein, Gelfand and Gelfand (cf. \cite{BGG}), Verma modules have played a prominent role in the theory of Lie algebras. 

A generalization of Verma's theorem to symmetrizable Kac-Moody algebras was achieved by Kac and Kazhdan in \cite{KK}. As an anonymous referee pointed out to the author, this adapts readily to the superalgebra setting, giving the result for basic Lie superalgebras as well, when the reflection in question is associated to a nonisotropic root.  In particular, the existence of a Shapovalov determinant formula (cf. \cite[Theorem 1]{KK} and \cite[Theorem 10.2.5]{M}) implies the result on a Zariski dense subset of the weights in question, and a standard density argument (cf. \cite[Theorem 4.2b]{KK} and \cite[Lemma 10]{BGG}) completes the proof.

We introduce some notations and definitions. Take $ \g = \mathfrak{n}^{-} \oplus \mathfrak{h} \oplus \mathfrak{n}^{+}$ to be a basic Lie superalgebra over $\mathbb{C}$ with a fixed triangular decomposition,  such that the corresponding simple system contains at most one isotropic odd root. We choose to consider $\gl(m|n)$ in type A. Denote the Weyl group of $\g$ by $W$ and denote the subgroup generated by reflections about nonisotropic simple roots by $W'.$  Let $\gamma$ be a positive nonisotropic root in the $W'$-orbit of a positive simple root, and let $M(\la)$ be the Verma module of highest weight $\la - \rho,$ where $\rho$ is the Weyl vector.  Recall that a singular vector is a nonzero weight vector, $v \in M(\la)$ such that $xv =0$ for all $x \in \mathfrak{n}^{+}$.  The existence of such a vector, $v$, implies the existence of a nonzero homomorphsim $M(\mu) \rightarrow M(\la)$, where $\mu -\rho$ is the weight of  $v$ (c.f. \cite[\S 2.3.2]{CW2}).

An alternate proof of Verma's theorem was developed by Shapovalov (cf. \cite{S}), which uses the fact that there is an explicit formula for singular vectors associated to simple reflections and that every positive root is in the Weyl group orbit of a simple root. Musson \cite[Corollary 9.2.7]{M}  has adapted this proof to show that there exists a nonzero homomorphism $M(s_{\gamma}\la) \rightarrow M(\la)$, when $\langle \la,h_{\gamma} \rangle = N \in \mathbb{Z}_{>0},$ where $N$ is odd if $\gamma$ is an odd root.  This gives a complete solution for $\gl(m|n)$, as $W = W'$ and every positive nonisotropic root is in the $W$-orbit of an even simple root.  Furthermore, for $\la -\rho$ typical, Musson \cite[Corollary 9.3.6]{M} extended the above result to any positive $\gamma$ that is neither isotropic nor twice another root.

In types other than A and for $\la$ general (i.e. no typicality conditions), there are positive nonisotropic roots to which Musson's method does not apply. Cheng and Wang (see \cite[Lemma 2.5]{CW1}) completed the solution in type $D(2|1; \zeta)$, by developing an explicit formula for a singular vector corresponding to the only positive nonisotropic root not in the $W'$-orbit of a simple root. This was inspired by a prior construction in \cite{KW} in the setting of affine superalgebras.  The success of this case led Cheng and Wang to conjecture (see \cite[Introduction]{CW1}) that a similar construction would work for other basic Lie superalgebras. 

The goal of this paper is to present simple explicit formulas for singular vectors, corresponding to a positive nonisotropic roots not in the $W'$-orbit of a simple root in the following situations: two for $\osp(2n+1|2m)$, $n \geq 1$, one for the standard simple system and one for the opposite simple system, two for $\osp(2m|2m)$, $n\geq 2,$ one for the standard simple system and one for the opposite simple system, and one each for $F(3|1)$ and $G(3)$, for their standard simple systems. For example, in the case of $\osp(2n|2m),$  $n\geq 2,$ standard simple system, we construct a vector of weight $s_{2\del_m}\la - \rho$ in $M(\la)$, where $\la - \rho$ need not be typical.

It is not obvious that these candidate singular vectors are nonzero, as their formulas are not written in a PBW basis of $U(\mathfrak{n}^-)$, and a significant part of the paper is spent proving this fact.  In each case, the proof amounts to showing that if we write the vector in a particular PBW basis, there is a basis element that can be seen to occur with nonzero coefficient in the vector.  This allows us to avoid finding a closed formula for the vector in the standard PBW basis, as Cheng and Wang did in \cite[Lemma 2.5]{CW1}, which would be difficult in the cases dealt with here.  Notably, unlike in the classical case, the homomorphisms determined by the singular vectors associated to even reflections are not necessarily embeddings (see Remark \ref{4.3}).

In the $F(3|1)$ and $G(3)$ cases, these singular vectors correspond to the only positive nonisotropic roots that are not twice another root and not in the $W'$-orbit of a simple root, so that completes these cases.  For the other cases, we adapt the proof of \cite[Theorem 9.2.6]{M} to obtain the result for all roots in the $W'$-orbit of the root correponding to the singular vector.  This completes the case of $\osp(2n|2m)$, $n \geq 2$ with the opposite simple system. In the other three cases, there remain $W'$-orbits of positive even roots to which we were unable to apply our methods.

The paper is organized as follows.  In \S \ref{2}, we introduce the basic terminology, notations and concepts that will be used throughout the paper, as well as the formulas for the candidate singular vectors.  In \S \ref{S3}, we prove that the candidate singular vectors are, in fact, singular.  In \S \ref{S4}, we adapt the proof of \cite[Theorem 9.2.6]{M} to obtain the desired homomorphisms for all roots in the $W'$-orbit of the root corresponding to the singular vector.
\vspace{.2in}

 \section{The Preliminaries and Formulas for Singular Vectors}\label{2}
In this section, we set up notation and introduce some basic ideas and constructions,  (cf. \cite{CW2}).  We also write down formulas for the vectors that we will prove to be singular in \S \ref{S3}.

Let $\g$ be a basic Lie superalgebra over $\mathbb{C}$ with simple system $\Pi$ and triangular decomposition
\[ \g = \mathfrak{n}^{-} \oplus \mathfrak{h} \oplus \mathfrak{n}^{+}, \]
where $\mathfrak{n}^{\pm}$ can be expressed as a direct sum of root spaces:  $\mathfrak{n}^{\pm} = \bigoplus_{\al \in \Phi^{\pm}} \g_{\al}.$
Here, $\Phi^+$ (resp. $\Phi^-$) are the positive roots (resp. negative roots).
We also write $\mathfrak{b}^{\pm} = \mathfrak{n}^{\pm} \oplus \mathfrak{h}.$
We have a decomposition of the positive roots into even and odd roots respectively: $\Phi^+ = \Phi_{0}^+ \cup \Phi_{1}^+. $  Write $W$ for the Weyl group of $\g$ and $W'$ for the subgroup of $W$ generated by reflections about nonisotropic simple roots.

Let $\langle -, - \rangle$ be the standard bilinear pairing on $\mathfrak{h}^* \times \mathfrak{h},$ and let $(-,-)$ be a non-degenerate, $W$-invariant bilinear pairing on  $\mathfrak{h}$ (cf. \cite[Theorem 1.18]{CW2}). For a non-isotropic root $\beta,$ define $h_{\beta}$ to be the coroot of $\beta,$ i.e., the unique element of $\mathfrak{h}$ such that $\langle \mu, h_{\beta} \rangle = \dfrac{2(\mu,\beta)}{(\beta,\beta)}$ for all $\mu \in \mathfrak{h}^*$.

For each $\al \in \Phi^+,$ fix root vectors $e_{\al} \in \g_{\al}$ and $f_{\al} = e_{-\al} \in \g_{-\al}.$
Let $\rho$ be the Weyl vector, which is given by
\[ \rho := \frac{1}{2}\sum_{\al \in \Phi_{0}^+}\al - \frac{1}{2}\sum_{\beta \in \Phi_{1}^+}\beta. \]

We will be concerned with a particular positive nonisotropic root $\gamma.$ Let $\la \in \mathfrak{h}^*$ be any weight such that 
\begin{equation}\label{N}
N := \langle \la, h_{\gamma} \rangle \in \Z_{>0},
\end{equation}
where
\begin{equation}\label{M}
\text{$N = 2M +1$ is odd if $\gamma$ is a nonisotropic odd root.}
\end{equation}
This arises as an integrality condition for the rank 1 subalgebra, $\osp(1|2)$ associated to odd, nonisotropic $\gamma.$
Denote by $M(\la)$ the Verma module of $\g$ of highest weight $\la - \rho$ with highest weight vector $v^+.$ It is defined as
\[ M(\la) := U(\g) \otimes_{U(\mathfrak{b}^+)} \mathbb{C}_{\la - \rho}, \]
where $U(\g)$ is the universal enveloping algebra of $\g$ and $ \mathbb{C}_{\la - \rho} $ is the 1 dimensional $\mathfrak{b}^+$-module upon which $\mathfrak{n}^+$ acts as $0$ and $\mathfrak{h}$ acts by the weight $\la - \rho.$ As in \cite[\S2.3.2]{CW2}, we define a singular vector $v$ of $M(\la)$ to be a nonzero weight vector such that $xv = 0$ for all $x \in \mathfrak{n}^{+}.$

Below we list the relevant Lie superalgebras and their simple systems along with the corresponding Dynkin diagrams, $\Phi_{0}^+$, $\Phi_{1}^+,$  $\gamma,$ and $\rho.$ We also introduce a vector, $u$, which we will show is singular of weight $\la - \rho - N\gamma$ (see \eqref{N}) in the next section.

For $\Phi_{0/1}^+$ in \S \ref{2.1} - \ref{2.4} below, we have that 
 \[ 1 \leq i < j \leq m, 1\leq p \leq m, 1 \leq k < l \leq n, 1 \leq q \leq n. \]

\subsection{$\g = \osp(2n+1|2m)$, $n \geq 1$, I}\label{2.1}
\begin{align*}
&\Pi = \{ \del_i - \del_{i+1}, \del_m -\ep_1, \ep_j -\ep_{j+1}, \ep_n|1 \leq i \leq m-1, 1 \leq j \leq n-1 \}, \\
&\Phi_{0}^+ = \{ \del_i \pm \del_j, 2\del_p, \ep_k \pm \ep_l, \ep_q  \}, \Phi_{1}^+ = \{\del_p, \del_p \pm \ep_q \}, \\
&\gamma = \del_m, \\
&\rho = \sum_{i=1}^{m}(m - n - i + \frac{1}{2})\del_i + \sum_{j=1}^{n}(n - j +\frac{1}{2})\ep_j. \\
\end{align*}

\begin{center}
\begin{tikzpicture}
\node at (0,0) {$\bigcirc$};
\draw (.2,0)--(1.3,0);
\node at (1.5,0) {$\bigcirc$};
\draw (1.7,0)--(2.6,0);
\node at (3,0) {. . .};
\draw (3.4,0)--(4.3,0);
\node at (4.5,0) {$\bigotimes$};
\draw (4.7,0)--(5.8,0);
\node at (6,0) {$\bigcirc$};
\draw (6.2,0)--(7.1,0);
\node at (7.5,0) {. . .};
\draw (7.9,0)--(8.8,0);
\node at (9,0) {$\bigcirc$};
\draw (9.2,-.1)--(10.3,-.1);
\node at (9.75,0) {\Large $>$};
\draw (9.2,.1)--(10.3,.1);
\node at (10.5,0) {$\bigcirc$};
\node at (0,-.5) {\tiny $\delta_1 -\delta_2$};
\node at (1.5,-.5) {\tiny $\delta_2 -\delta_3$};
\node at (4.5,-.5) {\tiny $\delta_m -\epsilon_1$};
\node at (6,-.5) {\tiny $\epsilon_1 -\epsilon_2$};
\node at (9,-.5) {\tiny $\epsilon_{n-1} -\epsilon_n$};
\node at (10.5,-.5) {\tiny $\epsilon_n$};
\end{tikzpicture}
\end{center}

We define a vector
\begin{equation}\label{f1}
u := \prod_{i=1}^n (e_{\delta_m-\ep_i}e_{\delta_m+\ep_i}) f_{\delta_m}^{2M+2n+1} v^+,
\end{equation}
which we will later prove is singular of weight $\la -\rho -(2M+1)\del_m$ (see \eqref{N}).   Recall that $u$ is a singular vector if it is a weight vector and $e_{\al}u = 0$ for all $\al \in \Phi^+$ (cf. \cite[\S 2.3.2]{CW2}). 

Moreover, we are grateful to an anonymous expert for pointing out a mistake in an earlier version of \eqref{f1} above and \eqref{f6} below.  The mistake was caused by the erroneous assumption that $N = 2M$ is even, which has been replaced by the assumption \eqref{M} in these cases.

\subsection{$\g = \osp(2n+1|2m),$ $n \geq 1,$ II}\label{2.2}
\begin{align*}
&\Pi = \{\ep_j - \ep_{j+1}, \ep_n -\del_1, \del_i -\del_{i+1}, \del_m|1 \leq i \leq m-1, 1 \leq j \leq n-1\}, \\
&\Phi_{0}^+ = \{ \del_i \pm \del_j, 2\del_p, \ep_k \pm \ep_l, \ep_q \}, \Phi_{1}^+ = \{\del_p, \ep_q \pm \del_p \}, \\
&\gamma = \ep_n, \\
&\rho = \sum_{i=1}^{m}(m - i + \frac{1}{2})\del_i + \sum_{j=1}^{n}(n - m - j +\frac{1}{2})\ep_j.
\end{align*}

\begin{center}
\begin{tikzpicture}
\node at (0,0) {$\bigcirc$};
\draw (.2,0)--(1.3,0);
\node at (1.5,0) {$\bigcirc$};
\draw (1.7,0)--(2.6,0);
\node at (3,0) {. . .};
\draw (3.4,0)--(4.3,0);
\node at (4.5,0) {$\bigotimes$};
\draw (4.7,0)--(5.8,0);
\node at (6,0) {$\bigcirc$};
\draw (6.2,0)--(7.1,0);
\node at (7.5,0) {. . .};
\draw (7.9,0)--(8.8,0);
\node at (9,0) {$\bigcirc$};
\draw (9.2,-.1)--(10.3,-.1);
\node at (9.75,0) {\Large $>$};
\draw (9.2,.1)--(10.3,.1);
\node at (10.5,-.04) {\Huge $\bullet$};
\node at (0,-.5) {\tiny $\epsilon_1 -\epsilon_2$};
\node at (1.5,-.5) {\tiny $\epsilon_2 -\epsilon_3$};
\node at (4.5,-.5) {\tiny $\epsilon_n -\delta_1$};
\node at (6,-.5) {\tiny $\delta_1 -\delta_2$};
\node at (9,-.5) {\tiny $\delta_{m-1} -\delta_m$};
\node at (10.5,-.5) {\tiny $\delta_m$};
\end{tikzpicture}
\end{center}

We define a vector
\begin{equation}\label{f2}
u := \prod_{i=1}^m (e_{\ep_n-\del_{i}}e_{\ep_n+\del_i}) f_{\ep_n}^{N+2m} v^+,
\end{equation}
which we will later prove is singular of weight $\la -\rho -N\ep_n$ (see \eqref{N}).

We will need the following commutation formulas for root vectors in $\g = \osp(2n+1|2m)$ from \cite[Section 1.2.4]{CW2} for the \S \ref{2.2} case of the proof of Theorem \ref{3} and for Remark \ref{4.3}.
\[
[e_{\ep_n + \del_m}, f_{\ep_n}] = -e_{\del_m}, [e_{\ep_n - \del_m}, f_{\ep_n}] = -e_{-\del_m}, [e_{\del_m}, f_{\ep_n}] = e_{\del_m - \ep_n},
\]
\[
[e_{-\del_m}, f_{\ep_n}] = e_{-\del_m - \ep_n}, [e_{\ep_n - \del_m}, e_{-\ep_n + \del_m}] = \frac{1}{2}(h_{\del_m} + h_{\ep_n}), [e_{\ep_n - \del_m}, e_{\del_m}] = -e_{\ep_n},
\]
\[
[e_{\del_m}, e_{-\del_m}] = \frac{1}{2}h_{\del_m}, [e_{\del_m}, e_{-\ep_n - \del_m}] = -f_{\ep_n}, [e_{-\ep_n + \del_m}, e_{-\del_m}] = f_{\ep_n}.
\]

\subsection{$\g = \osp(2n|2m)$, $n \geq 2$, I}\label{2.3}
\begin{align*}
&\Pi = \{\del_i - \del_{i+1}, \del_m -\ep_1, \ep_j -\ep_{j+1}, \ep_{n-1}+\ep_n|1 \leq i \leq m-1, 1 \leq j \leq n-1\}, \\
&\Phi_{0}^+ = \{ \del_i \pm \del_j, 2\del_p, \ep_k \pm \ep_l \}, \Phi_{1}^+ = \{\del_p \pm \ep_q \}, \\
&\gamma = 2\del_m, \\
&\rho = \sum_{i=1}^{m}(m - n - i + 1)\del_i + \sum_{j=1}^{n}(n - j)\ep_j.
\end{align*}

\begin{center}
\begin{tikzpicture}
\node at (0,0) {$\bigcirc$};
\draw (.2,0)--(1.3,0);
\node at (1.5,0) {$\bigcirc$};
\draw (1.7,0)--(2.6,0);
\node at (3,0) {. . .};
\draw (3.4,0)--(4.3,0);
\node at (4.5,0) {$\bigotimes$};
\draw (4.7,0)--(5.8,0);
\node at (6,0) {$\bigcirc$};
\draw (6.2,0)--(7.1,0);
\node at (7.5,0) {. . .};
\draw (7.9,0)--(8.8,0);
\node at (9,0) {$\bigcirc$};
\draw (9.15, .15)--(9.85,.85);
\node at (10,1) {$\bigcirc$};
\draw (9.15,-.15)--(9.85,-.85);
\node at (10,-1) {$\bigcirc$};
\node at (0,-.5) {\tiny $\delta_1 -\delta_2$};
\node at (1.5,-.5) {\tiny $\delta_2 -\delta_3$};
\node at (4.5,-.5) {\tiny $\delta_m -\epsilon_1$};
\node at (6,-.5) {\tiny $\epsilon_1 -\epsilon_2$};
\node at (8.5,-.5) {\tiny $\epsilon_{n-2} -\epsilon_{n-1}$};
\node at (11, 1) {\tiny $\epsilon_{n-1} - \epsilon_n$};
\node at (11, -1) {\tiny $\epsilon_{n-1} + \epsilon_n$};
\end{tikzpicture}
\end{center}

We define a vector
\begin{equation}\label{f3}
u := \prod_{i=1}^n (e_{\delta_m-\ep_i}e_{\delta_m+\ep_i}) f_{2\delta_m}^{N+n} v^+,
\end{equation}
which we will later prove is singular of weight $\la -\rho -2N\del_m.$

\subsection{$\g = \osp(2n|2m),$ $n \geq 2,$ II}\label{2.4}
\begin{align*}
&\Pi = \{\ep_j - \ep_{j+1}, \ep_n -\del_1, \del_i -\del_{i+1}, 2\del_m|1 \leq i \leq m-1, 1 \leq j \leq n-1\}, \\
&\Phi_{0}^+ = \{ \del_i \pm \del_j, 2\del_p, \ep_k \pm \ep_l, \}, \Phi_{1}^+ = \{\ep_q \pm \del_p \}, \\
&\gamma = \ep_{n-1} + \ep_n, \\
&\rho = \sum_{i=1}^{m}(m - i + 1)\del_i + \sum_{j=1}^{n}(n - m - j)\ep_j.
\end{align*}

\begin{center}
\begin{tikzpicture}
\node at (0,0) {$\bigcirc$};
\draw (.2,0)--(1.3,0);
\node at (1.5,0) {$\bigcirc$};
\draw (1.7,0)--(2.6,0);
\node at (3,0) {. . .};
\draw (3.4,0)--(4.3,0);
\node at (4.5,0) {$\bigotimes$};
\draw (4.7,0)--(5.8,0);
\node at (6,0) {$\bigcirc$};
\draw (6.2,0)--(7.1,0);
\node at (7.5,0) {. . .};
\draw (7.9,0)--(8.8,0);
\node at (9,0) {$\bigcirc$};
\draw (9.2,-.1)--(10.3,-.1);
\node at (9.75,0) {\Large $<$};
\draw (9.2,.1)--(10.3,.1);
\node at (10.5,0) {$\bigcirc$};
\node at (0,-.5) {\tiny $\epsilon_1 -\epsilon_2$};
\node at (1.5,-.5) {\tiny $\epsilon_2 -\epsilon_3$};
\node at (4.5,-.5) {\tiny $\epsilon_n -\delta_1$};
\node at (6,-.5) {\tiny $\delta_1 -\delta_2$};
\node at (9,-.5) {\tiny $\delta_{m-1} -\delta_m$};
\node at (10.5,-.5) {\tiny $2\delta_m$};
\end{tikzpicture}
\end{center}

We define a vector
\begin{equation}\label{f4}
u := \prod_{i=1}^m (e_{\ep_{n}-\del_{i}}e_{\ep_n+\del_i})\prod_{i=1}^m (e_{\ep_{n-1}-\del_{i}}e_{\ep_{n-1}+\del_i}) f_{\ep_{n-1}+\ep_n}^{N+2m} v^+,
\end{equation}
which we will later prove is singular of weight $\la - \rho - N(\ep_{n-1} + \ep_n).$

\subsection{$\g = F(3|1)$}\label{2.5}
\begin{align*}
&\Pi = \{ \ep_1 - \ep_2, \ep_2 - \ep_3, \ep_3, \frac{1}{2}(\del - \ep_1 - \ep_2 - \ep_3) \}, \\
&\Phi_{0}^+ = \{ \del, \ep_1, \ep_2, \ep_3, \ep_i \pm \ep_j|1\leq i < j \leq 3 \}, \Phi_{1}^+ = \{ \frac{1}{2}(\del \pm \ep_1  \pm \ep_2 \pm \ep_3) \}, \\
&\gamma = \del, \\
&\rho = \frac{1}{2}(-3\del + 5\ep_1 + 3\ep_2 + \ep_3).
\end{align*}
We introduce some notation for $\g = F(3|1).$  An odd root is denoted by the ordered tuple of signs appearing in it. For example $\frac{1}{2}(\del - \ep_1 - \ep_2 +\ep_3)$ is denoted by $+--+$.

\begin{center}
\begin{tikzpicture}
\node at (0,0) {$\bigcirc$};
\draw (.2,0)--(1.3,0);
\node at (1.5,0) {$\bigcirc$};
\draw (1.7,.1)--(2.8,.1);
\draw (1.7,-.1)--(2.8,-.1);
\node at (2.25,0) {\Large $>$};
\node at (3,0) {$\bigcirc$};
\draw (3.2,0)--(4.3,0);
\node at (4.5,0) {$\bigotimes$};
\node at (0,-.5) {\tiny $\epsilon_1 - \epsilon_2$};
\node at (1.5, -.5) {\tiny $\epsilon_2 - \epsilon_3$};
\node at (3, -.5) {\tiny $\epsilon_3$};
\node at (4.5, -.5) {\tiny $\frac{1}{2}(\delta - \epsilon_1 - \epsilon_2 - \epsilon_3)$};
\end{tikzpicture}
\end{center}

We define a vector
\begin{equation}\label{f5}
u := e_{+---}e_{+--+}e_{+-+-}e_{+-++}e_{++--}e_{++-+}e_{+++-}e_{++++} f_{\delta}^{N+4} v^+,
\end{equation}
which we will later prove is singular of weight $\la -\rho -N\del.$

\subsection{$\g = G(3)$}\label{2.6}
\begin{align*} 
&\Pi = \{ \ep_{2} - \ep_{1}, \ep_1 , \del + \ep_3 \}, \ep_1 + \ep_2 + \ep_3 = 0, \\
&\Phi_{0}^+ = \{  2\del, \ep_1, \ep_2, -\ep_3, \ep_2 - \ep_1, \ep_1 - \ep_3, \ep_2 - \ep_3\}, \Phi_{1}^+ = \{\ \del, \del \pm \ep_i| 1 \leq i \leq 3\}, \\
&\gamma = \del, \\
&\rho = -\frac{5}{2} \del + 2\ep_1 + 3\ep_2.
\end{align*}

\begin{center}
\begin{tikzpicture}
\node at (0,0) {$\bigcirc$};
\draw (0.2,0)--(1.155,0);
\draw (0.15,0.1)--(1.2,0.1);
\draw (0.15,-0.1)--(1.2,-0.1);
\node at (1.35,0) {$\bigcirc$};
\node at (0.75,0) {\Large $>$};
\draw (1.52,0)--(2.52,0);
\node at (2.7,0) {$\bigotimes$};
\node at (-0.3,-.5) {\tiny $\epsilon_2-\epsilon_1$};
\node at (1.3,-.5) {\tiny $\epsilon_1$};
\node at (2.9,-.5) {\tiny $\delta+\epsilon_3$};
\end{tikzpicture}
\end{center}

We define a vector
\begin{equation}\label{f6}
u := e_{\del - \ep_1}e_{\del + \ep_1}e_{\del - \ep_2}e_{\del + \ep_2}e_{\del - \ep_3}e_{\del + \ep_3} f_{\delta}^{2M+7} v^+,
\end{equation}
which we will later prove is singular of weight $\la -\rho -(2M+1)\del.$

\section{The vectors $u$ are singular}\label{S3}

Recall the positive nonisotropic roots $\gamma$ and vectors $u$ defined in \S \ref{2.1} - \ref{2.6}.  In this section, we will prove that the vectors $u$ are singular, which implies the existence of a nonzero homomorphism $M(s_{\gamma}\la) \rightarrow M(\la).$

\begin{lem}\label{L3.1}
Changing the order of the product of positive odd root vectors in the formulas for $u$ given in \S \ref{2.1} - \ref{2.6} changes $u$ by a factor of $\pm1.$
\end{lem}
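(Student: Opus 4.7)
My plan is to establish the lemma by showing that the swap of any two adjacent odd root vector factors in the product $\Omega$ (the part of $u$ preceding $f_\gamma^{N'} v^+$, where $N'$ is the exponent of $f_\gamma$) multiplies $u$ by $-1$; iterating over a reduced expression of an arbitrary permutation then yields the sign of the permutation, which is $\pm 1$. Using the super-commutator
\[
e_\beta e_\alpha \;=\; -e_\alpha e_\beta + [e_\alpha, e_\beta],
\]
each swap contributes $-u$ plus a ``correction term'' obtained by replacing $e_\alpha e_\beta$ in $u$ with $[e_\alpha, e_\beta]$. The task is to show that this correction vanishes.

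My first step is a case-by-case enumeration, by inspection of $\Phi^+$ in each of \S \ref{2.1}--\ref{2.6}, of pairs $(e_\alpha, e_\beta)$ of odd root vectors in $\Omega$ with $\alpha + \beta \in \Phi$; for all other pairs the bracket is automatically zero. In \S \ref{2.2}, no such pair exists (the possible sums $2\ep_n$ and $2\ep_n \pm (\del_i \pm \del_j)$ are never roots), so the lemma is immediate. In the remaining cases, the nonvanishing brackets are: $[e_{\delta_m - \ep_i}, e_{\delta_m + \ep_i}] \propto e_{2\delta_m}$ in \S \ref{2.1} and \S \ref{2.3}; $[e_{\ep_n \mp \del_i}, e_{\ep_{n-1} \pm \del_i}] \propto e_\gamma$ in \S \ref{2.4}; the sign-opposite pair brackets $\propto e_\delta = e_\gamma$ in \S \ref{2.5}; and $[e_{\delta - \ep_i}, e_{\delta + \ep_i}] \propto e_{2\delta}$ in \S \ref{2.6}. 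In each instance, the even vector $e_{\alpha+\beta}$ commutes with every other odd factor $e_\eta$ in $\Omega$, because $(\alpha+\beta) + \eta$ can be checked to lie outside $\Phi$; this lets me move $e_{\alpha+\beta}$ past the other odd factors so the correction becomes $(\text{product of remaining odd factors}) \cdot e_{\alpha+\beta} f_\gamma^{N'} v^+$.

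The third step reduces each correction to a rank-one calculation in the subalgebra generated by $e_\gamma$, $f_\gamma$, and $h_\gamma$, which is an $\sll$ when $\gamma$ is even and an $\osp(1|2)$ when $\gamma$ is odd. In \S \ref{2.4} and \S \ref{2.5}, where $\gamma$ is even and $e_{\alpha+\beta} = e_\gamma$, I would apply the standard Verma-module identity $e_\gamma f_\gamma^k v^+ = k(\langle \la - \rho, h_\gamma \rangle - k + 1) f_\gamma^{k-1} v^+$; a direct computation of $\langle \rho, h_\gamma \rangle$ from the given formula for $\rho$ shows that the exponent $N'$ chosen in the definition of $u$ satisfies $N' = \langle \la - \rho, h_\gamma \rangle + 1$, forcing the second factor to vanish. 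In the remaining cases \S \ref{2.1}, \S \ref{2.3}, and \S \ref{2.6}, where $\gamma$ is odd and $e_{\alpha+\beta} = e_{2\gamma}$, an analogous $\osp(1|2)$-module computation, combined with the parity condition \eqref{M}, should deliver the vanishing. The main technical obstacle is verifying this last vanishing step in the $\osp(1|2)$ cases; the interplay between the chosen exponent of $f_\gamma$, the value of $\langle \rho, h_\gamma \rangle$, and the parity hypothesis $N = 2M+1$ of \eqref{M} is delicate, and is presumably what necessitated the correction noted in the remark after \eqref{f1}.
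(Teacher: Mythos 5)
Your strategy---reduce to adjacent transpositions, enumerate the pairs of odd factors whose super-bracket is nonzero, push the resulting root vector to the right past the other odd factors, and kill it against $f_\gamma^{N'}v^+$ by a rank-one Verma-module computation---is exactly the paper's proof, and your root-by-root enumeration of the non-anticommuting pairs is correct in every case. Two points need repair. First, you misclassify \S\ref{2.3}: there $\gamma=2\del_m$ is an \emph{even} root ($\del_m$ is not a root of $\osp(2n|2m)$ at all), the formula uses $f_{2\del_m}$, and the correction term is $e_{2\del_m}f_{2\del_m}^{N+n}v^+$, which is disposed of by the ordinary $\sll$ identity you reserve for \S\ref{2.4} and \S\ref{2.5}; your proposed $\osp(1|2)$ computation with $f_{\del_m}$ has no meaning in that case. (This is in fact the case the paper writes out in full.) Second, for the genuinely odd cases \S\ref{2.1} and \S\ref{2.6} you leave the crucial vanishing unverified. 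It does go through: since $e_{2\gamma}\propto e_\gamma^2$, one application of the $\osp(1|2)$ relation $e_\gamma f_\gamma^{2k+1}w=f_\gamma^{2k}(\tfrac12 h_\gamma-k)w$ gives, in \S\ref{2.1}, $e_{\del_m}f_{\del_m}^{2M+2n+1}v^+=f_{\del_m}^{2M+2n}\bigl(\tfrac12\langle\la-\rho,h_{\del_m}\rangle-(M+n)\bigr)v^+=0$ because $\langle\la,h_{\del_m}\rangle=2M+1$ and $\langle\rho,h_{\del_m}\rangle=1-2n$, and similarly in \S\ref{2.6} with $\langle\rho,h_\del\rangle=-5$; the need for the exponent of $f_\gamma$ to be odd here is precisely where the parity hypothesis \eqref{M} enters, confirming your suspicion. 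Your observation that \S\ref{2.2} is vacuous because $2\ep_n$ is not a root is correct and slightly sharper than the paper's bare assertion that this case is ``similar.''
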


\begin{proof}
In the cases of \S\ref{2.1} and \S\ref{2.3}, among the isotropic root vectors in the formula for $u$, the only one failing to anti-commute with $e_{\del_m \pm \ep_k}$ is $e_{\del_m \mp \ep_k}$.  Instead $[e_{\del_m \pm \ep_k},e_{\del_m \mp \ep_k}]$ is a multiple of $e_{2\del_m}.$ Let $u'$ be any vector obtained by rearranging the terms of the product.  Then, in the case of \S\ref{2.3}, $u'$ is equal to $\pm u$ plus a linear combination of terms of the form
\[ v := \prod_{i=1}^{k-1} (e_{\delta_m-\ep_i}e_{\delta_m+\ep_i})\prod_{i=k+1}^n (e_{\delta_m-\ep_i}e_{\delta_m+\ep_i}) e_{2\del_m}f_{2\delta_m}^{N+n} v^+, \]
and we have by $\mathfrak{sl}_2$ relations,
\begin{align}
\begin{split}
e_{2\del_m}f_{2\del_m}^{N+n}v^+
&= f_{2\del_m}^{N+n}e_{2\del_m}v^+ + (N+n)f_{2\del_m}^{N+n-1}(h_{2\del_m} - (N+n-1))v^+ \\
&= (N+n)f_{2\del_m}^{N+n-1}(\langle \la - \rho, h_{2\del_m} \rangle - (N +n -1))v^+ = 0. 
\end{split}
\end{align}
Hence, $v = 0$ and $u' = \pm u.$ The cases of \S\ref{2.2}, \S\ref{2.4} and \S\ref{2.5} are similar.

In the case of \S\ref{2.1}, $u'$ is equal to $\pm u$ plus a linear combination of terms of the form:
\[ v' := \prod_{i=1}^{k-1} (e_{\delta_m-\ep_i}e_{\delta_m+\ep_i})\prod_{i=k+1}^n (e_{\delta_m-\ep_i}e_{\delta_m+\ep_i}) e_{\del_m}^{2}f_{\delta_m}^{2M+2n+1} v^+. \]

In order to show that $v' = 0$, we must first recall an $\osp(1|2)$ relation from \cite[\S A.4.4]{M}.  Note that $e_{\del_m}$ and $f_{\del_m}$ generate a copy of $\osp(1|2)$ in $\osp(2m+1|2n)$, and suppose $[e_{\del_m}, f_{\del_m}] = \frac{1}{2}h_{\del_m}$. Let $w$ be a highest weight vector in a Verma Module for $\osp(1|2)$.  Then we have the following formula:
\[ e_{\del_m}f_{\del_m}^{2n+1}w = f_{\del_m}^{2n}(\frac{1}{2}h_{\del_m} - n)w.\]

Thus,
\begin{align}
\begin{split}
e_{\del_m}f_{\del_m}^{2M+2n+1}v^+
&= f_{\del_m}^{2M+2n}(\frac{1}{2}h_{\del_m} - (M+n))v^+ \\
&= f_{\del_m}^{2M+2n}(\frac{1}{2}\langle \la - \rho, h_{\del_m} \rangle - (M +n))v^+ = 0. 
\end{split}
\end{align}
Hence, $v' = 0$ and $u' = \pm u.$ The case of \S\ref{2.6} is similar.
\end{proof}
\vspace{.3in}

\begin{lem}
The vectors $u$ defined in \S \ref{2.1} - \ref{2.6} are nonzero.
\end{lem}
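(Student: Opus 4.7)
The plan follows the strategy outlined in the introduction: rather than compute a closed formula for $u$ in the standard PBW basis of $U(\mathfrak{n}^-)$ (as in \cite[Lemma 2.5]{CW1}), one chooses a convenient PBW ordering on the negative root vectors and exhibits a single basis element that appears in $u$ with nonzero coefficient. By the PBW theorem $u = X v^+$ for a unique $X \in U(\mathfrak{n}^-)$, so showing that some PBW basis element has nonzero coefficient in $X$ implies $u \neq 0$.

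The argument proceeds case by case. In each of \S\ref{2.1}--\ref{2.6}, $u$ has the form $u = E \cdot f_{\gamma}^{k} v^+$, with $E \in U(\mathfrak{n}^+)$ a product of positive (mostly odd) root vectors and $k$ explicit. One commutes the factors of $E$ to the right past $f_{\gamma}^{k}$, using that each bracket $[e_{\alpha}, f_{\gamma}]$ is either zero, a Cartan element, or a root vector of weight $\alpha - \gamma$; root vectors in $\mathfrak{n}^+$ produced in the process are commuted further until everything lies in $\mathfrak{n}^- \oplus \mathfrak{h}$, factors that eventually reach $v^+$ as elements of $\mathfrak{n}^+$ are killed, and factors in $\mathfrak{h}$ evaluate to scalars via the weight $\la - \rho$. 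For each case, one designates a ``target'' PBW basis element $\phi \in U(\mathfrak{n}^-)$ of the correct weight and tracks one distinguished chain of commutations producing $\phi$; its contribution is a product of Lie bracket structure constants times a combinatorial factor counting which copies of $f_{\gamma}$ were commuted with each factor of $E$. The conditions \eqref{N} and \eqref{M} on $N$, together with the rearrangement freedom provided by Lemma \ref{L3.1}, ensure that the $\osp(1|2)$ and $\sll$ coefficients arising along the way do not vanish.

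The main obstacle is uniqueness: several distinct commutation chains might, a priori, contribute to $\phi$ and produce cancellation. This is handled by choosing the PBW ordering so that $\phi$ is extremal (for instance, lexicographically maximal) among the basis elements of the correct weight reachable from $u$; under this ordering no competing chain can reach $\phi$, and its coefficient equals the single contribution computed above. The $\osp$ cases \S\ref{2.1}--\S\ref{2.4} are relatively transparent, because the bracket structure among root vectors of weights $\pm\del_m, \pm\ep_i$ is simple, while the exceptional cases \S\ref{2.5} ($F(3|1)$, with eight odd factors in $E$) and \S\ref{2.6} ($G(3)$, with six odd factors) demand a more careful combinatorial analysis in order to pin down a $\phi$ with a uniquely traceable coefficient.
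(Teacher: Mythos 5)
Your overall strategy is the same as the paper's: write $u$ in a suitable PBW basis of $U(\mathfrak{n}^-)$ and exhibit one basis element whose coefficient is visibly nonzero. However, the step that carries essentially all of the difficulty --- ruling out cancellation among the many ways a given basis element can arise --- is asserted rather than argued. You say one can choose the ordering so that the target $\phi$ is ``extremal (for instance, lexicographically maximal)'' among reachable basis elements and that ``under this ordering no competing chain can reach $\phi$.'' You never specify $\phi$ in any case, never specify the ordering, and never explain why extremality excludes competing contributions. For the products of eight (resp.\ six) odd root vectors in the $F(3|1)$ and $G(3)$ cases this is precisely the nontrivial combinatorial content, and it is not at all clear that a generic lexicographic choice works; moreover, tracking a single ``distinguished chain of commutations'' from $u$ itself is the wrong bookkeeping, since after applying several factors of $E$ the intermediate expression is already a large linear combination of basis elements, and one must control which of \emph{those} can produce the target at the next step.

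What the paper actually does is an induction that peels off one factor (or one pair $e_{\del_m-\ep_k}e_{\del_m+\ep_k}$, etc.) at a time: it introduces intermediate vectors $u_k$ and explicitly chosen intermediate basis elements $v_k$, with $u_K=v_K$ at the top, and proves that the coefficient of $v_{k-1}$ in $u_{k-1}$ is a nonzero multiple of the coefficient of $v_k$ in $u_k$. The no-cancellation claim at each step is justified concretely: by weight considerations, by checking which negative root vectors occurring in $v_{k-1}$ lie in the image of $\mathrm{ad}$ of the positive root vector being applied (so that $v_{k-1}$ can only come from the single term $v_k$ among all terms of the form \eqref{3.3} present in $u_k$), and, in the $F(3|1)$ and $G(3)$ cases, by an auxiliary invariant --- the sum of the degree and the exponent of $f_{\del}$ (resp.\ $f_{2\del}$) --- which is bounded above by $N+4$ (resp.\ $M+4$) on every basis element occurring in any $u_i$ and attains that bound exactly on the $v_i$. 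Without naming the targets and supplying an argument of this kind (or a genuine substitute for it), your proposal is a correct plan but not a proof; the gap is exactly the ``uniqueness of the contributing chain'' step.
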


\begin{proof}
In each case, we will proceed as follows.  We will fix a basis of $M(\la),$ and describe a sequence of elements of this basis $v_k,$ and corresponding vectors $u_k,$ such that $u_K = v_K,$ where K is the largest index in each case and $u_{l} = u,$ where $l$ is the smallest index.  Then, we will show that the coefficient of $v_k$ in $u_k$ is a nonzero multiple of the coefficient of $v_{k+1}$ in $u_{k+1}$ for each $k$, proving the lemma.

In the case of \S\ref{2.1}, we take the basis to be any PBW basis of $M(\la)$ that contains the following vectors:
\begin{equation}\label{3.3}
e_{-\ep_n}^{d_n}. . .e_{-\ep_k}^{d_k}e_{-\ep_n - \ep_{n-1}}^{a_{n-1,n,-}}e_{\ep_n - \ep_{n-1}}^{a_{n-1,n,+}}. . .e_{-\ep_{k+1} - \ep_{k}}^{a_{k,k+1,-}}e_{\ep_{k+1} - \ep_{k}}^{a_{k,k+1,+}}e_{-\del_m + \ep_n}^{b_n}e_{-\del_m - \ep_n}^{c_n}. . . e_{-\del_m + \ep_k}^{b_k}e_{-\del_m - \ep_k}^{c_k}f_{\del_m}^{s}f_{2\delta_m}^{R} v^+
\end{equation}
 of weight $\la -\rho -(2M-1+2k)\del_m,$ where $a_{i,j,\pm}, d_{r}, R \in \Z_{\geq0}$ and $b_p, c_q, s \in \{0,1\}.$ In \eqref{3.3}, $e_{\pm\ep_j - \ep_{i}}$ can appear for any $i,j$ such that $k \leq i <j \leq n$.  From left to right in \eqref{3.3}, the sum of the indices, i.e $i +j$, of the $e_{\pm\ep_j - \ep_{i}}$ is decreasing.  Beyond that, the particular ordering fixed for these terms does not matter. 

We define the following:
\begin{align*}
u_k :&= \prod_{i=k}^n (e_{\delta_m-\ep_i}e_{\delta_m+\ep_i}) f_{\del}f_{2\delta_m}^{M+n} v^+, \\ 
u_{n+1} &= v_{n+1} := f_{\del}f_{2\delta_m}^{M+n} v^+,\\ 
v_k :&= e_{-\ep_n}e_{\ep_n - \ep_{n-1}}. . .e_{\ep_{k+1} - \ep_k}e_{-\del_m + \ep_k}f_{2\delta_m}^{M+k-1} v^+.
\end{align*}
We make use of the fact that $f_{\del_m}^{2} = Cf_{2\del_m}$, where $C$ is a nonzero scalar, so $u_{1}$ is a nonzero multiple of $u$ in this case.  We suppose that for some $k$, $u_k$ can be written as a linear combination of elements of the form \eqref{3.3}, and that the coefficient of $v_k$ in $u_k$ is nonzero.  Clearly, these statements hold for $k = n + 1.$  As $u_{k-1} = e_{\del_m - \ep_{k-1}}e_{\del_m + \ep_{k-1}}u_k,$ it is easy to check that $v_{k-1}$ can only be obtained from the term $v_{k}$ among those of the form \eqref{3.3} by multiplying on the left by $e_{\del_m - \ep_{k-1}}e_{\del_m + \ep_{k-1}},$ by commuting $e_{\del_m - \ep_{k-1}}$ with $e_{-\del_m + \ep_k}$ and $e_{\del_m + \ep_{k-1}}$ with the $f_{2\del_m}$. The exception is for $k = n+1$, in which case $e_{\del_m - \ep_{n}}$ is commuted with $f_{\del_m}$ and $e_{\del_m + \ep_{n}}$ is commuted with the $f_{2\del_m}.$  Hence, the coefficient of $v_{k-1}$ in $u_{k-1}$ is nonzero.  It is also easy to check that multiplying on the left by $e_{\del_m - \ep_{k-1}}e_{\del_m + \ep_{k-1}}$ preserves the form \eqref{3.3} with $k$ replaced by $k-1$.  This completes the induction and the proof of the lemma in the case of \S\ref{2.1}.

The proof in the case of \S\ref{2.2} is similar to the one in the case of \S\ref{2.1}.  We take the basis to be any PBW basis containing the following vectors:
\begin{equation}\label{3.4}
e_{-\del_m}^{a_{m}}. . .e_{-\del_k}^{a_{k}}e_{-\del_m - \del_{m-1}}^{b_{m-1,m,-}}e_{\del_m - \del_{m-1}}^{b_{m-1,m,+}}. . . e_{-\del_{k+1} - \del_{k}}^{b_{k,k+1,-}}e_{\del_{k+1} - \del_{k}}^{b_{k,k+1,+}}e_{-\ep_n + \del_m}^{c_{m}}e_{-\ep_n - \del_m}^{d_m}. . .e_{-\ep_n + \del_k}^{c_k}e_{-\ep_n - \del_k}^{d_k}f_{\ep_n}^{R}v^+
\end{equation} 
of weight  $\la-\rho - (N+2k -2)\ep_n,$ where the $a$'s , the $b$'s and $R$ are non-negative integers and the $c$'s and $d$'s are 0 or 1. In \eqref{3.4}, $e_{\pm\del_j - \del_i}$ can appear for any $i, j$ such that $k \leq i < j \leq m$.  From left to right in \eqref{3.4}, the sum of the indices, i.e. $i+j$, of $e_{\pm\del_j - \del_i}$  is decreasing.  Other than that, the fixed ordering of these terms in \eqref{3.4} does not matter.
The terms of the form \eqref{3.4} here play the same role as those of the form \eqref{3.3} in the case of \S\ref{2.1}.
We define the following:
\begin{align*}
u_k :&= \prod_{i=k}^m (e_{\ep_n-\del_{i}}e_{\ep_n+\del_i}) f_{\ep_n}^{N+2m} v^+, \\ 
u_{m+1} &= v_{m+1} := f_{\ep_n}^{N+2m} v^+, \\
v_k :&= e_{-\del_m}e_{\del_m - \del_{m-1}}. . .e_{\del_{k+1} - \del_k}e_{-\ep_n + \del_{k}}f_{\ep_{n}}^{N+2k-3}v^+.
\end{align*}
The above vectors play the same role as the vectors of the same name in the proof of the \S \ref{2.1} case. The vector $v_{k-1}$ is obtained from $v_k$ by left multiplication by $e_{\ep_n - \del_{k-1}}e_{\ep_n + \del_{k-1}}$ by commuting $e_{\ep_n + \del_{k-1}}$ with the $f_{\ep_n}$ and $e_{\ep_n -\del_{k-1}}$ with $e_{-\ep_n + \del_k},$ except for $k=m+1$, in which case both $e_{\ep_n + \del_m}$ and $e_{\ep_n - \del_m}$ are commuted with the $f_{\ep_n}.$

Similarly, in the case of \S\ref{2.3}, we take the basis to be a PBW basis of $M(\la)$ that contains the following vectors:
\begin{equation}\label{3.5}
e_{-\ep_n - \ep_{n-1}}^{a_{n-1,n,-}}e_{\ep_n - \ep_{n-1}}^{a_{n-1,n,+}}. . .e_{-\ep_{k+1} - \ep_{k}}^{a_{k,k+1,-}}e_{\ep_{k+1} - \ep_{k}}^{a_{k,k+1,+}}e_{-\del_m + \ep_n}^{b_n}e_{-\del_m - \ep_n}^{c_n}. . . e_{-\del_m + \ep_k}^{b_k}e_{-\del_m - \ep_k}^{c_k}f_{2\delta_m}^{R} v^+
\end{equation}
 of weight $\la -\rho -2(N +k -1)\del_m,$ where $a_{i,j,\pm}, R \in \Z_{\geq0}$ and $b_s, c_t \in \{0,1\}.$ In \eqref{3.5}, $e_{\pm\ep_j - \ep_{i}}$ can appear for any $i,j$ such that $k \leq i <j \leq n$.  From left to right in \eqref{3.5}, the sum of the indices, i.e $i +j$, of the $e_{\pm\ep_j - \ep_{i}}$ is decreasing.

We define:
\begin{align*}
u_k :&= \prod_{i=k}^n (e_{\delta_m-\ep_i}e_{\delta_m+\ep_i}) f_{2\delta_m}^{N+n} v^+, \\ 
u_{n+1} &= v_{n+1} := f_{2\delta_m}^{N+n} v^+,\\ 
v_k :&= e_{\ep_n - \ep_{n-1}}. . .e_{\ep_{k+1} - \ep_k}e_{-\del_m - \ep_n}e_{-\del_m + \ep_k}f_{2\delta_m}^{N+k -2} v^+.
\end{align*}
The $u_k$ and $v_k$ play the same role as before.  Here, $v_{k-1}$ is obtained from the term $v_{k}$ by multiplying on the left by $e_{\del_m - \ep_{k-1}}e_{\del_m + \ep_{k-1}},$ by commuting $e_{\del_m - \ep_{k-1}}$ with $e_{-\del_m + \ep_k}$ and $e_{\del_m + \ep_{k-1}}$ with the $f_{2\del_m}$. The exception is for $k = n+1$, in which case  both $e_{\del_m - \ep_{n}}$ and $e_{\del_m + \ep_{n}}$ are commuted with the $f_{2\del_m}.$

The proof in the case of \S\ref{2.4} is similar to the one in the case of \S\ref{2.1}. We take as a basis any PBW basis containing the following vectors:
\begin{equation}\label{3.6}
\begin{split}
&e_{-2\del_m}^{l_m}. . . e_{-2\del_k}^{l_k}e_{-\del_m - \del_{m-1}}^{a_{m-1,m,-}}e_{\del_m - \del_{m-1}}^{a_{m-1,m,+}}. . . e_{-\del_{k+1} - \del_{k}}^{a_{k,k+1,-}}e_{\del_{k+1} - \del_{k}}^{a_{k,k+1,+}}e_{-\ep_n + \del_m}^{b_m}e_{-\ep_n - \del_m}^{c_m}e_{-\ep_{n-1} + \del_m}^{d_m}e_{-\ep_{n-1} - \del_m}^{g_m}. . . \\ 
\cdot &e_{-\ep_n + \del_k}^{b_k}e_{-\ep_n - \del_k}^{c_k}e_{-\ep_{n-1} + \del_k}^{d_k}e_{-\ep_{n-1} - \del_k}^{g_k}e_{\ep_n - \ep_{n-1}}^{S}f_{\ep_{n-1} + \ep_n}^{R}v^+,
\end{split}
\end{equation} 
of weight $\la - \rho -(N +2(k-1))(\ep_{n-1} + \ep_n)$,
 where the $a$, $l$, $R$, and $S$ are in  $\Z_{\geq 0}$ and the $b,c,d,$ and $g$ are $0$ or $1$. The comments on the indices and ordering of the $e_{\pm\del_j - \del_i}$ terms in \eqref{3.4} apply here as well.
The terms of the form \eqref{3.6} the play the same role as those of the form \eqref{3.3} in the case of \S\ref{2.1}.
As before, we define the following:
\begin{align*}
 u_k :&= \prod_{i=k}^m (e_{\ep_{n}-\del_{i}}e_{\ep_n+\del_i} e_{\ep_{n-1}-\del_{i}}e_{\ep_{n-1}+\del_i}) f_{\ep_{n-1}+\ep_n}^{N+2m} v^+, \\
 u_{m+1} &= v_{m+1} :=  f_{\ep_{n-1}+\ep_n}^{N+2m} v^+, \\
 v_k :&= e_{-2\del_m}e_{\del_m-\del_{m-1}}^{2}. . . e_{\del_{k+1}-\del_k}^{2}e_{-\ep_n + \del_k}e_{-\ep_{n-1} + \del_k}f_{\ep_{n-1} + \ep_n}^{N+2k -3}v^+.
\end{align*}
The above vectors play the same role as the vectors of the same name in the proof of the \S \ref{2.1} case. Here $v_{k-1}$ is obtained from $v_k$ by left multiplication by $e_{\ep_n - \del_{k-1}}e_{\ep_n + \del_{k-1}}e_{\ep_{n-1} -\del_{k-1}}e_{\ep_{n-1} + \del_{k-1}}$  by commuting $e_{\ep_{n-1} +\del_{k-1}}$ and $e_{\ep_{n} + \del_{k-1}}$ with the $f_{\ep_{n-1} + \ep_n}$ and by commuting $e_{\ep_{n-1} -\del_{k-1}}$ and $e_{\ep_{n} - \del_{k-1}}$ with $e_{-\ep_{n-1} + \del_k}$ and $e_{-\ep_{n} + \del_k}$ respectively. The exception is for $k = m+1,$ in which case, $e_{\ep_{n-1} - \del_m},$ $e_{\ep_{n-1} + \del_m}$ and $e_{\ep_n + \del_m}$ are commuted with the $f_{\ep_{n-1} + \ep_n}$ and $e_{\ep_n - \del_m}$ is commuted with $e_{-\ep_n - \del_m}.$

In the case of \S\ref{2.5}, we use the following basis for $M(\la):$
\begin{equation}\label{3.7}
\begin{split}
&e_{-\ep_3}^{a_1}e_{-\ep_2}^{a_2}e_{-\ep_1}^{a_3}e_{-\ep_2 - \ep_3}^{b_1}e_{-\ep_2 + \ep_3}^{b_2}e_{-\ep_1 - \ep_3}^{b_3}e_{-\ep_1 + \ep_3}^{b_4}e_{-\ep_1 - \ep_2}^{b_5}e_{-\ep_1 + \ep_2}^{b_6} \\ 
&\cdot e_{-+++}^{c_1}e_{-++-}^{c_2}e_{-+-+}^{c_3}e_{-+--}^{c_4}e_{--++}^{c_5}e_{--+-}^{c_6}e_{---+}^{c_7}e_{----}^{c_8}f_{\del}^{R}v^+,
\end{split}
\end{equation}
where $a_i$, $b_j$ and $R \in \Z_{\geq 0},$ and $c_{k} \in \{0,1\}.$

Consider the basis element $v_{i}$ in $u_{i}$, where
\begin{align*}
&v_{0} := e_{-\ep_3}e_{\ep_3 - \ep_2}e_{\ep_2 - \ep_1}e_{-+++}e_{-+--} f_{\del}^{N-1} v^+, \\
&v_{1} := e_{\ep_3 - \ep_2}e_{\ep_2 - \ep_1}e_{-+++}e_{-++-}e_{-+--} f_{\del}^{N-1} v^+, \\ 
&v_{2} :=  e_{\ep_2 - \ep_1}e_{-+++}e_{-++-}e_{-+-+}e_{-+--} f_{\del}^{N-1} v^+, \\ 
&v_{3} := e_{-+++}e_{-++-}e_{-+-+}e_{-+--}e_{--++} f_{\del}^{N-1} v^+, \\
&v_{4} :=  e_{-+++}e_{-++-}e_{-+-+}e_{-+--} f_{\del}^{N} v^+, \\
&v_{5} := e_{-+++}e_{-++-}e_{-+-+} f_{\del}^{N+1} v^+, \\
&v_{6} := e_{-+++}e_{-++-} f_{\del}^{N+2} v^+, \\
&v_{7} := e_{-+++} f_{\del}^{N+3} v^+, \\
&v_{8} := f_{\delta}^{N+4} v^+,
\end{align*}
\begin{align*}
&u_{0} := u, \\
&u_{1} := e_{+--+}e_{+-+-}e_{+-++}e_{++--}e_{++-+}e_{+++-}e_{++++} f_{\delta}^{N+4} v^+, \\
&u_{2} := e_{+-+-}e_{+-++}e_{++--}e_{++-+}e_{+++-}e_{++++} f_{\delta}^{N+4} v^+, \\
&u_{3} := e_{+-++}e_{++--}e_{++-+}e_{+++-}e_{++++} f_{\delta}^{N+4} v^+,\\ 
&u_{4} := e_{++--}e_{++-+}e_{+++-}e_{++++} f_{\delta}^{N+4} v^+, \\
&u_{5} := e_{++-+}e_{+++-}e_{++++} f_{\delta}^{N+4} v^+, \\
&u_{6} := e_{+++-}e_{++++} f_{\delta}^{N+4} v^+, \\
&u_{7} := e_{++++} f_{\delta}^{N+4} v^+, \\
&u_{8} := f_{\delta}^{N+4} v^+.
\end{align*}

We call the sum of the exponents of the negative root vectors other than $f_{\del}$ in a basis element its degree.  Multiplying on the left by a positive odd root vector, degree can only increase and the exponent of $f_{\del}$ can only decrease by commuting the positive root vector past the $f_{\del}.$ In such a case, the degree increases by one and the exponent of $f_{\del}$ decreases by one. Moreover, if a basis element, $w'$, with greater exponent of $f_{\del}$ is gotten from one, $w$, with lesser exponent of $f_{\del}$ by left multiplication by a positive odd root vector, then the sum of the degree and the exponent of $f_{\del}$ of $w'$ is less than that of $w$. Hence, for a basis element appearing in one of the $u_i$, the sum of the degree and the exponent of $f_{\del}$ is less than or equal to $N+4.$ Note that this sum is exactly $N + 4$ for the $v_i.$

Thus, the vector $v_{0}$ is obtained from terms of degree 4 with exponent of $f_{\del}$ equal to $N$ or degree 5 with exponent of $f_{\del}$ equal to $N-1$ in $u_{1}$ by applying $e_{+---}.$  Because the term $e_{----}$ does not appear in $v_{0},$ it must have come from terms of degree 5.  But, of the terms other than $f_{\del}$ appearing in $v_{0},$ only $e_{-\ep_3}$ lies in the image of $ad_{e_{+---}},$ so $v_{0}$ could only have come from $v_{1}$, with coefficient of $v_{0}$ in $u$ a nonzero multiple of that of $v_{1}$ in $u_1$.  Using similar arguments, one sees that the  $v_{i}$  term in $u_{i}$ could only have come from the $v_{i+1}$ term in $u_{i+1}$ by applying the appropriate positive odd root vector, and that the coefficient of $v_{i}$ in $u_{i}$ is a nonzero multiple of that of $v_{i+1}$ in $u_{i+1}.$  As the coefficient of $v_{8}$ in $u_{8}$ is 1, the coefficient of $v_{0}$ in $u$ is nonzero, and, consequently, $u \neq 0.$

In the case of \S\ref{2.6}, we consider the following basis for $M(\la)$:
\begin{equation}\label{3.7}
e_{\ep_3}^{a_1}e_{-\ep_2}^{a_2}e_{-\ep_1}^{a_3}e_{\ep_3 - \ep_2}^{b_1}e_{\ep_3 - \ep_1}^{b_2}e_{\ep_1 - \ep_2}^{b_3} e_{-\del + \ep_3}^{c_1}e_{-\del - \ep_3}^{c_2}e_{-\del + \ep_2}^{c_3}e_{-\del - \ep_2}^{c_4}e_{-\del + \ep_1}^{c_5}e_{-\del - \ep_1}^{c_6}f_{\del}^{s} f_{2\delta}^{R} v^+,
\end{equation}
where $a_{i}, b_{j}$ and $R \in \Z_{\geq 0},$ and $c_{k}, s \in \{0, 1\}.$

Consider the following basis elements $v_i$ in $u_i$:
\begin{align*}
&v_0 := e_{-\ep_1}^{2}e_{\ep_1 - \ep_2}e_{-\del - \ep_3}f_{2\del}^{M}v^+, \\
&v_1 := e_{-\ep_1}^{2}e_{-\del - \ep_3}e_{-\del - \ep_2}f_{2\del}^{M}v^+, \\
&v_2 := e_{-\ep_1}e_{-\del +\ep_3}e_{-\del - \ep_3}e_{-\del - \ep_2}f_{2\del}^{M}v^+, \\
&v_3 := e_{-\del + \ep_3}e_{-\del - \ep_3}e_{-\del - \ep_2}f_{\del}f_{2\del}^{M}v^+, \\
&v_4 := e_{-\del + \ep_3}e_{-\del -\ep_3}f_{\del}f_{2\del}^{M+1}v^+, \\
&v_5 := e_{-\del - \ep_3}f_{\del}f_{2\del}^{M+2}v^+, \\
&v_6 := f_{\del}f_{2\del}^{M+3}v^+,
\end{align*}
\begin{align*}
&u_0 := e_{\del + \ep_1}e_{\del + \ep_2}e_{\del - \ep_1}e_{\del - \ep_2}e_{\del + \ep_3}e_{\del -\ep_3} f_{\del}f_{2\delta}^{M+3} v^+, \\
&u_1 := e_{\del + \ep_2}e_{\del - \ep_1}e_{\del - \ep_2}e_{\del + \ep_3}e_{\del -\ep_3} f_{\del}f_{2\delta}^{M+3} v^+ , \\
&u_2 := e_{\del - \ep_1}e_{\del - \ep_2}e_{\del + \ep_3}e_{\del -\ep_3} f_{\del}f_{2\delta}^{M+3} v^+ , \\
&u_3 := e_{\del - \ep_2}e_{\del + \ep_3}e_{\del -\ep_3} f_{\del}f_{2\delta}^{M+3} v^+ , \\
&u_4 := e_{\del + \ep_3}e_{\del -\ep_3} f_{\del}f_{2\delta}^{M+3} v^+ , \\
&u_5 := e_{\del -\ep_3} f_{\del}f_{2\delta}^{M+3} v^+ , \\
&u_6 := f_{\del}f_{2\delta}^{M+3} v^+ .
\end{align*}

Note that $u_0$ is a nonzero multiple of $u$. We show that the coefficient of the basis element $v_0$ in $u_0$ is nonzero, and hence that $u$ is nonzero.  This term is obtained by applying $e_{\del + \ep_1}$ to $u_1$ written in the basis and reordering terms as necessary. Similarly to the proof of the case of \S \ref{2.5}, we say that the degree of a basis element is the sum of the exponents of the negative root vectors other than $f_{2\del}$ appearing in it.  Multiplying on the left by a positive odd root vector other than $e_{\del}$, degree can only increase and the exponent of $f_{2\del}$ can only decrease by commuting the positive root vector past the $f_{2\del}.$ This increases degree by one and also reduces the exponent of $f_{2\del}$ by one.  Moreover, if a basis element, $w'$, with greater exponent of $f_{2\del}$ is gotten from one, $w$, with lesser exponent of $f_{2\del}$ by left multiplication by a positive odd root vector other than $e_{\del}$, then the sum of the degree and the exponent of $f_{2\del}$ of $w'$ is less than that of $w$. Hence, the sum of the degree and the exponent of $f_{2\del}$ is less than or equal to $M +4$ for any basis element appearing in one of the $u_i.$ Note that this sum is exactly $M+4$ for the $v_i.$

Since $v_0$ does not contain the term $e_{-\del + \ep_1},$ $v_0$ must have come from terms of degree 4 and exponent of $f_{2\del}$ equal to $M$ in $u_1$ by applying $e_{\del + \ep_1}.$  Of the root vectors other than $f_{2\del}$ appearing in $v_0$, only $e_{\ep_1 - \ep_2}$ lies in the image of $ad_{e_{\del + \ep_1}},$ so the coefficient of $v_0$ in $u_0$ is a non-zero multiple of the coefficient of $v_1$ in $u_1$. With similar reasoning, we see that the coeffcient of $v_i$ in $u_i$ is a nonzero multiple of that $v_{i+1}$ in $u_{i+1}$. As the coefficient of $v_{6}$ in $u_{6}$ is 1, the coefficient of $v_0$ in $u_0$ is non-zero, and hence $u \neq 0.$
\end{proof}

Now that we know $u \neq 0,$ we are in the position to prove that the vectors $u$ are singular.

\begin{thm}\label{3}
The vectors $u$ defined in \S \ref{2.1} - \ref{2.6} are singular.
\end{thm}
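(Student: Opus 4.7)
The strategy is to verify, case by case, that $e_\alpha u = 0$ for every positive simple root $\alpha$; since $\mathfrak{n}^+$ is generated by the simple root vectors and $u$ is a weight vector, this implies that $u$ is annihilated by all of $\mathfrak{n}^+$, and the preceding lemma supplies nonzero-ness.  The three tools needed are: (i) Lemma \ref{L3.1}, which lets one freely permute the positive odd root vectors appearing in $u$ at the cost of a sign; (ii) the explicit bracket relations among root vectors (those listed in \S \ref{2.2} for $\osp(2n+1|2m)$ and their analogues for the other algebras, all obtainable from \cite[\S 1.2.4]{CW2}); and (iii) the integrality identities
\[
e_\gamma f_\gamma^{N+c}v^+ = 0 \qquad \text{and} \qquad e_\gamma f_\gamma^{2M+2c+1}v^+ = 0,
\]
coming from the $\sll$ relation (respectively, the $\osp(1|2)$ relation recalled inside the proof of Lemma \ref{L3.1}) together with the hypotheses \eqref{N} and \eqref{M}.

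For each simple $\alpha$ the computation splits into two regimes.  When $\alpha$ is ``far'' from $\gamma$, meaning $e_\alpha$ (anti)commutes with every factor $e_{\gamma\pm\ep_i}$, $e_{\gamma\pm\del_i}$, etc.\ appearing in $u$, it remains to analyze $e_\alpha f_\gamma^{R}v^+$, which reduces after commuting $e_\alpha$ past powers of $f_\gamma$ to $f_\gamma^R e_\alpha v^+ = 0$, except when $\alpha=\gamma$, in which case item (iii) applies directly.  When $\alpha$ is ``close'' to $\gamma$, $e_\alpha$ has a nonzero bracket with one or more factors in the product; here I would invoke Lemma \ref{L3.1} to move the offending factor adjacent to $e_\alpha$, compute the bracket, and show that each resulting term vanishes by one of three mechanisms: a produced factor $e_\beta^2 = 0$ for some isotropic odd $\beta$; a produced $e_\gamma$ that eventually meets $f_\gamma^R v^+$ and vanishes by (iii); or explicit cancellation between two terms arising from different orderings of the commutator pushes.

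The main obstacle, I expect, will be the $F(3|1)$ and $G(3)$ cases, where the product of positive odd root vectors in the formula for $u$ contains eight (resp.\ six) factors and several simple roots are adjacent to the isotropic node of the Dynkin diagram.  Tracking which brackets can produce which root vectors, and pairing each with one of the three vanishing mechanisms, requires careful case analysis.  The $\osp(2n|2m)$, II case is also somewhat delicate because the product in \eqref{f4} has two nested layers (corresponding to $\ep_{n-1}$ and $\ep_n$) and the simple root $2\del_m$ interacts with both; however, once set up one again reduces to the $\sll$ relation applied to $f_{\ep_{n-1}+\ep_n}^R v^+$.  In every case the argument should be routine once the correct rearrangement from Lemma \ref{L3.1} is chosen, and the six cases share a common structural outline.
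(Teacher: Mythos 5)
Your overall strategy coincides with the paper's: check $e_{\alpha}u=0$ for each simple root $\alpha$, using Lemma \ref{L3.1} to reorder the odd factors, and kill each resulting term by one of a short list of mechanisms. Most of your predictions are accurate, except that you have the difficulty inverted: the $F(3|1)$ and $G(3)$ cases, which you single out as the hardest, are in fact the most mechanical --- every nontrivial term of $e_{\alpha}u$ there vanishes because it contains the square of an isotropic odd root vector, with only one term in the $G(3)$ computation for $e_{\ep_1}$ requiring the ``produced $e_{\gamma}$ meets $f_{\gamma}^{R}v^+$'' mechanism.

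The genuine gap is the odd non-isotropic simple root $\del_m$ in the case of \S\ref{2.2}. None of your three vanishing mechanisms applies to $e_{\del_m}u$: the bracket of $e_{\del_m}$ with $f_{\ep_n}$ produces $e_{\del_m-\ep_n}$, which in turn brackets nontrivially with the factors $e_{\ep_n\pm\del_m}$, and after all the pushes one is left with a \emph{single} surviving basis term of the form $e_{-\ep_n+\del_m}f_{\ep_n}^{\,\cdot}\,v^+$ whose coefficient is a scalar $a-b+(N+2m-3)/2$, with $a=\langle\la-\rho-\cdots,\tfrac12(h_{\ep_n}+h_{\del_m})\rangle$ and $b=\langle\la-\rho-\cdots,\tfrac12 h_{\del_m}\rangle$. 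That scalar does vanish, but only via an explicit computation combining the hypothesis \eqref{N} with the specific value $\langle\rho,h_{\ep_n}\rangle=1-2m$; it is neither a square of an isotropic vector, nor an instance of $e_{\gamma}f_{\gamma}^{R}v^+=0$, nor a cancellation between two differently-ordered terms. This is the one place where the full list of commutation formulas recorded in \S\ref{2.2} is actually needed, so ``routine once the correct rearrangement is chosen'' undersells it; a complete write-up must carry out this computation (the analogous issue does not arise in \S\ref{2.4}, where the corresponding simple root $2\del_m$ is even and its action dies by squares of isotropic vectors).
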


\begin{proof}We begin with the case of \S\ref{2.1}.  In all the cases, we check that $e_{\al}u =0,$ where $\al \in \Pi.$ The $c_i$, $d_k,$ and $g_k$  below denote scalars.
First, note that  $\la -\rho -2N\del_m + \del_{i} - \del_{i+1}$ is not a lower weight than $\la - \rho.$  
Hence, $e_{\del_i - \del_{i+1}}u = 0.$  
Also, it is clear that $e_{\del_m - \ep_1}u = 0,$ as $e_{\del_m - \ep_1}^{2} = 0$. We also have that 
\begin{align}\label{3.8}
\begin{split} 
e_{\ep_{k} - \ep_{k+1}}u = &c_1 \prod_{i=1}^{k-1} (e_{\delta_m-\ep_i}e_{\delta_m+\ep_i})e_{\del_m - \ep_k}e_{\del_m + \ep_k}^{2}e_{\del_m - \ep_{k+1}}\prod_{i=k+2}^{n} (e_{\delta_m-\ep_i}e_{\delta_m+\ep_i}) f_{\delta_m}^{2M+n+1} v^+ \\
&+  c_2 \prod_{i=1}^{k-1} (e_{\delta_m-\ep_i}e_{\delta_m+\ep_i})e_{\del_m + \ep_k}e_{\del_m - \ep_{k+1}}^{2}e_{\del_m + \ep_{k+1}}\prod_{i=k+2}^{n} (e_{\delta_m-\ep_i}e_{\delta_m+\ep_i}) f_{\delta_m}^{2M+n+1} v^+ = 0.
\end{split}
\end{align} 
The particular values of the scalars, $c_1$ and $c_2$, in \eqref{3.8} do not matter as each term is 0.  This phenomenon will be observed throughout the proof.  Finally, by $\osp(1|2)$ relations, we have
\begin{align*}
\begin{split}
 e_{\ep_n}u &= c_3 \prod_{i=1}^{n-1}(e_{\delta_m-\ep_i}e_{\delta_m+\ep_i})e_{\delta_m + \ep_n}e_{\delta_m} f_{\del_m}f_{2\delta_m}^{M+n} v^+ + c_4\prod_{i=1}^{n}(e_{\delta_m-\ep_i}e_{\delta_m+\ep_i})e_{-\delta_m + \ep_n} f_{2\delta_m}^{M+n} v^+ \\
&= 0 + c_5\prod_{i=1}^{n-1}(e_{\delta_m-\ep_i}e_{\delta_m+\ep_i})e_{\delta_m - \ep_n}e_{-\delta_m + \ep_n}^{2} f_{2\delta_m}^{M+n-1} v^+ \\
&= 0.
\end{split}
\end{align*}

This case of \S\ref{2.2} is similar to that of \S\ref{2.1} with the roles of the $\del_i$ and $\ep_k$ reversed.  The one exception is that we will have to use a different method for $\del_m$ from the one we used for $\ep_n$. Here, we use the commutation formulas of \S \ref{2.2}. A tedious computation shows that 
\begin{align}
\begin{split}
w :&= e_{\del_m}e_{\ep_n -\del_m}e_{\ep_n + \del_m}f_{\ep_n}^{N +2m}v^+ \\ 
&= -A(N - 2m -2)[a - b + (N -2m -3)/2]e_{-\ep_n + \del_m}f_{\ep_n}^{N - 2m - 3}v^+,
\end{split}
\end{align}
where $A:= \frac{1}{2}(N + 2m)(N + 2m -1), 
b:= \langle \la - \rho -(N -2m -3)\ep_n, \frac{1}{2}h_{\del_m} \rangle,$ and 
\begin{align}
\begin{split}
a :&= \langle \la - \rho - (N + 2m -2)\ep_n, \frac{1}{2}(h_{\ep_n} +h_{ \del_m}) \rangle\\ 
&= \frac{1}{2}[\langle \la -\rho, h_{\ep_n} \rangle + \langle \la - \rho, h_{\del_m} \rangle - (N-2m-2)\langle \ep_n, h_{\ep_n} \rangle] \\ 
&= \frac{1}{2}[(N+2m-1) + 2b -2(N+2m-2)] \\ 
&= b -(N+2m-3)/2,
\end{split}
\end{align} 
so $w = 0.$ Hence, we have that $e_{\del_m}u = \prod_{i=1}^{m-1} (e_{\ep_n-\del_{i}}e_{\ep_n+\del_i})w = 0.$

 In the case of \S\ref{2.3} one argues as in the case of \S\ref{2.1}, noting that the argument here for $\ep_{n-1} + \ep_n$ is similar to the one for $\ep_{i} - \ep_{i+1}$ in the case of \S \ref{2.1}.

The case of \S\ref{2.4} is similar to that of \S\ref{2.2}, except for the roots $2\del_m$ and $\ep_{n-1} - \ep_n.$ We have
\begin{align}
\begin{split}
e_{2\del_m}u =  &c_6 \prod_{i=1}^{m-1} (e_{\ep_{n}-\del_{i}}e_{\ep_n+\del_i})e_{\ep_n +\del_m}^{2}\prod_{i=1}^m (e_{\ep_{n-1}-\del_{i}}e_{\ep_{n-1}+\del_i}) f_{\ep_{n-1}+\ep_n}^{N+2m} v^+ \\
&+ c_7  \prod_{i=1}^m (e_{\ep_{n}-\del_{i}}e_{\ep_n+\del_i})\prod_{i=1}^{m-1} (e_{\ep_{n-1}-\del_{i}}e_{\ep_{n-1}+\del_i})e_{\ep_{n-1} + \del_m}^{2} f_{\ep_{n-1}+\ep_n}^{N+2m} v^+ = 0,
\end{split}
\end{align}
\begin{align}
\begin{split}
e_{\ep_{n-1} - \ep_n}u = &\sum_{k=1}^{m}  d_k \prod_{i=1}^{k-1} (e_{\ep_{n}-\del_{i}}e_{\ep_n+\del_i})e_{\ep_n - \del_k}\prod_{i=k+1}^{m} (e_{\ep_{n}-\del_{i}}e_{\ep_n+\del_i})\\
&\cdot\prod_{i=1}^{k-1} (e_{\ep_{n-1}-\del_{i}}e_{\ep_{n-1}+\del_i})e_{\ep_{n-1} - \del_k}e_{\ep_{n-1} + \del_k}^{2}\prod_{i=k+1}^{m} (e_{\ep_{n-1}-\del_{i}}e_{\ep_{n-1}+\del_i}) f_{\ep_{n-1}+\ep_n}^{N+2m} v^+ \\ 
&+ \sum_{k=1}^{m}  g_k \prod_{i=1}^{k-1} (e_{\ep_{n}-\del_{i}}e_{\ep_n+\del_i})e_{\ep_n + \del_k}\prod_{i=k+1}^{m} (e_{\ep_{n}-\del_{i}}e_{\ep_n+\del_i}) \\
&\cdot\prod_{i=1}^{k-1} (e_{\ep_{n-1}-\del_{i}}e_{\ep_{n-1}+\del_i})e_{\ep_{n-1} - \del_k}^{2}e_{\ep_{n-1} + \del_k}\prod_{i=k+1}^{m} (e_{\ep_{n-1}-\del_{i}}e_{\ep_{n-1}+\del_i}) f_{\ep_{n-1}+\ep_n}^{N+2m} v^+ = 0.
\end{split}
\end{align}

In the case of \S\ref{2.5}, we have that
\begin{align}
\begin{split}
e_{\ep_1 - \ep_2}u = &c_8  e_{+---}e_{+--+}e_{+-++}e_{++--}^{2}e_{++-+}e_{+++-}e_{++++} f_{\delta}^{N+4} v^+ \\ 
&+ c_9   e_{+---}e_{+--+}e_{+-+-}e_{++--}e_{++-+}^{2}e_{+++-}e_{++++} f_{\delta}^{N+4} v^+ = 0.
\end{split}
\end{align}
The case of $e_{\ep_2 - \ep_3}$ is entirely similar. We also have
\begin{align}
\begin{split}
e_{\ep_3}u = &c_{10}  e_{+--+}^{2}e_{+-+-}e_{+-++}e_{++--}e_{++-+}e_{+++-}e_{++++} f_{\delta}^{N+4} v^+ \\
&+ c_{11} e_{+---}e_{+--+}e_{+-++}^{2}e_{++--}e_{++-+}e_{+++-}e_{++++} f_{\delta}^{N+4} v^+ \\
&+ c_{12} e_{+---}e_{+--+}e_{+-+-}e_{+-++}e_{++-+}^{2}e_{+++-}e_{++++} f_{\delta}^{N+4} v^+ \\
&+ c_{13} e_{+---}e_{+--+}e_{+-+-}e_{+-++}e_{++--}e_{++-+}e_{++++}^{2} f_{\delta}^{N+4} v^+ = 0,
\end{split}
\end{align}
 and 
\[ e_{+---}u = e_{+---}^{2}e_{+--+}e_{+-+-}e_{+-++}e_{++--}e_{++-+}e_{+++-}e_{++++} f_{\delta}^{N+4} v^+ = 0. \]

In the case of \S\ref{2.6}, we have that
\begin{align}
\begin{split}
e_{\ep_1}u = &c_{14} e_{\del - \ep_2}e_{\del + \ep_2}e_{\del - \ep_3}e_{\del + \ep_3}e_{\del + \ep_1} e_{\del}f_{\del}f_{2\delta}^{M+3} v^+ \\
&+ c_{15} e_{\del - \ep_2}^{2}e_{\del + \ep_2}e_{\del - \ep_3}e_{\del - \ep_1}e_{\del + \ep_1} f_{\del}f_{2\delta}^{M+3} v^+ \\
&+ c_{16} e_{\del - \ep_2}e_{\del - \ep_3}^{2}e_{\del + \ep_3}e_{\del - \ep_1}e_{\del + \ep_1} f_{\del}f_{2\delta}^{M+3} v^+ \\
&+c_{17} e_{\del - \ep_2}e_{\del + \ep_2}e_{\del - \ep_3}e_{\del + \ep_3}e_{\del - \ep_1} e_{\del + \ep_1}e_{-\del + \ep_1}f_{2\delta}^{M+3} v^+ \\
&= 0 + 0 + 0 + c_{18} e_{\del - \ep_2}e_{\del + \ep_2}e_{\del - \ep_3}e_{\del + \ep_3}e_{\del - \ep_1}e_{-\del + \ep_1}^{2}f_{2\delta}^{M+2} v^+ \\
&= 0.
\end{split}
\end{align}
Similarly, we have that
\begin{align}
\begin{split}
e_{\del + \ep_3}u = &\pm e_{\del + \ep_3}^{2}e_{\del - \ep_2}e_{\del + \ep_2}e_{\del - \ep_3}e_{\del + \ep_1}e_{\del - \ep_1} f_{\delta}^{2M+7} v^+ = 0,\\
e_{\ep_2 - \ep_1}u = &c_{19} e_{\del - \ep_1}e_{\del - \ep_2}e_{\del + \ep_2}^{2}e_{\del - \ep_3}e_{\del + \ep_3} f_{\delta}^{2M+7} v^+ \\
&+ c_{20}  e_{\del - \ep_1}^{2}e_{\del + \ep_1}e_{\del + \ep_2}e_{\del - \ep_3}e_{\del + \ep_3} f_{\delta}^{2M+7} v^+ = 0.
\end{split}
\end{align}
Hence, the vectors $u$ are singular.
\end{proof}

\section{Homomorphisms between Verma Modules}\label{S4}

As an application of the results of \S \ref{S3}, we give an alternative proof of the following special case of the theorem of \cite{KK} presented in the introduction.

\begin{thm}\label{4}
Let $\mu \in \mathfrak{h}^*$ be such that $\langle \mu, h_{\al} \rangle = C \in \Z_{\geq 0}$, where $\al$ is a positive nonisotropic root that is not twice another root and $C$ is odd or zero if $\al$ is an odd root. There exists a nonzero homomorphism $M(s_{\al}\mu) \rightarrow M(\mu),$ in the following settings:
\begin{enumerate}
\item $\al = \del_p,$ $1 \leq p \leq m$ in the case of \S\ref{2.1},
\item $\al =\ep_q$, $1 \leq q \leq n$ in the case of \S\ref{2.2},
\item $\al = 2\del_p$, $1 \leq p \leq m$ in the case of \S\ref{2.3},
\item $\al = \ep_i + \ep_j$, $1 \leq i < j \leq n$ in the case of \S\ref{2.4},
\item $\al = \del$ in the case of \S\ref{2.5},
\item $\al = \del$ in the case of \S\ref{2.6}.
\end{enumerate}
\end{thm}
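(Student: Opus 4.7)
The plan is to adapt Musson's proof of \cite[Theorem 9.2.6]{M}, using the singular vectors from Theorem \ref{3} in place of simple-reflection singular vectors at the $\gamma$-step.

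Cases (5) and (6) are immediate. In $F(3|1)$ and $G(3)$ with the standard simple systems, the only simple root containing $\delta$ is isotropic odd, so $W'$ is generated by reflections that fix $\delta$. Hence the $W'$-orbit of $\gamma = \delta$ is $\{\delta\}$, and Theorem \ref{3} directly supplies the desired homomorphism.

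For cases (1)--(4), I first verify that the roots $\alpha$ listed in the theorem are exactly the positive elements of the $W'$-orbit of $\gamma$, using the explicit simple systems in \S\ref{2.1}--\S\ref{2.4}. For instance, in case (1) the generators of $W'$ permute $\{\del_1,\ldots,\del_m\}$ while fixing each $\ep_q$, so the $W'$-orbit of $\gamma=\del_m$ is $\{\del_p:1\leq p\leq m\}$; the other three verifications are similarly short. Now given $\alpha = w(\gamma)$ with $w \in W'$, I fix a reduced expression $w = s_{\beta_1}\cdots s_{\beta_r}$ in simple non-isotropic reflections and consider the chain of weights
\[
\mu,\ s_{\beta_1}\mu,\ s_{\beta_2}s_{\beta_1}\mu,\ \ldots,\ w^{-1}\mu,\ s_\gamma w^{-1}\mu,\ s_{\beta_r}s_\gamma w^{-1}\mu,\ \ldots,\ ws_\gamma w^{-1}\mu = s_\alpha\mu,
\]
in which consecutive weights differ either by a simple $W'$-reflection or, at the middle step, by $s_\gamma$. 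Since $(-,-)$ is $W'$-invariant, $\langle w^{-1}\mu,h_\gamma\rangle = \langle\mu,h_\alpha\rangle = C$, so Theorem \ref{3} applies at the $s_\gamma$-step; at each simple-reflection step, \cite[Corollary 9.2.7]{M} yields a homomorphism whenever the corresponding intermediate pairing is a positive integer (odd if the root is odd).

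For $\mu$ in the Zariski dense subset of $H_C := \{\mu \in \mathfrak{h}^* : \langle \mu, h_\alpha\rangle = C\}$ on which all intermediate pairings take admissible positive integer values, the chain of homomorphisms composes to a nonzero $M(s_\alpha\mu)\to M(\mu)$. To pass to arbitrary $\mu \in H_C$, I invoke the standard density argument of \cite[Theorem 4.2b]{KK} and \cite[Lemma 10]{BGG}: the existence of a nonzero Verma-module homomorphism of fixed weight shift is a Zariski closed condition on $\mu$, cut out by a rank drop in the Shapovalov form, so it extends from the dense locus to all of $H_C$. The main technical subtlety is ensuring that the chain of simple reflections can be arranged to be compatible with the weight ordering and that the intermediate integrality locus is indeed dense in $H_C$; both follow from the explicit root-system data of \S\ref{2} together with the Zariski density of integer lattices in complex affine space, exactly as in Musson's original argument.
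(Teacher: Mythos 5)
Your overall strategy --- compose a chain of Verma-module homomorphisms along a reduced expression conjugating $\gamma$ to $\alpha$, verify it on a dense subset of the hyperplane $H_C$, and extend by semicontinuity of rank --- is a legitimate route, but it is genuinely different from the paper's. The paper instead follows Shapovalov and Musson \cite[Theorem 9.2.6]{M}: it defines Shapovalov elements $\theta_{\beta,C}\in U(\mathfrak{b}^-)^{-C\beta}$ (uniform families of singular vectors, polynomial in the highest weight), takes Theorem \ref{3} as the base case, and transfers them across one simple reflection $\kappa$ at a time via the identity $e_{-\kappa}^{p+2C}\theta_{2\del_i,C}(\mu)=\theta e_{-\kappa}^{p}$; the density argument there is used only to show that the resulting $\theta$ extends polynomially to a Shapovalov element for the reflected root. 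Your version is shorter and yields bare existence; the paper's yields Shapovalov elements for every root in the $W'$-orbit, which is a stronger output.

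There is, however, one genuine gap as written: the assertion that ``the chain of homomorphisms composes to a nonzero $M(s_\alpha\mu)\to M(\mu)$'' is exactly the step that is \emph{not} automatic in the super setting. For semisimple Lie algebras this is free because $U(\mathfrak{n}^-)$ is a domain and all Verma homomorphisms are injective; here $U(\mathfrak{n}^-)$ has zero divisors (odd root vectors square to zero or to multiples of even root vectors), and the paper's Remark \ref{4.3} emphasizes that the homomorphism supplied by Theorem \ref{3} need not be an embedding. So a composite of nonzero maps could a priori vanish, and your argument collapses without an extra observation. The fix is available but must be stated: every map in your chain \emph{except} the middle one is induced by a singular vector of the form $e_{-\beta}^{p}v^+$ for an even simple root $\beta$, and left (or right) multiplication by an even root vector is injective on $U(\mathfrak{n}^-)$ (pass to the associated graded $S(\mathfrak{n}^-_{\bar 0})\otimes\Lambda(\mathfrak{n}^-_{\bar 1})$, where such an element is a non-zero-divisor). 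Hence the composite sends the highest weight vector of $M(s_\alpha\mu)$ to $Q\,u'$, where $Q$ is a product of powers of even negative root vectors and $u'$ is the image of the nonzero singular vector $u$ of Theorem \ref{3} under injective maps; this is nonzero. A second, minor imprecision: the closed condition you want is not quite ``rank drop of the Shapovalov form'' (whose radical detects the maximal submodule, not singular vectors at the prescribed weight), but the drop in rank of the $\mu$-polynomial linear map $x\mapsto(e_{\alpha_i}xv^+)_i$ on the weight space $U(\mathfrak{n}^-)^{-C\alpha}$, which is the form of the statement actually proved in \cite[Lemma 10]{BGG} and \cite[Theorem 4.2b]{KK}. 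With these two points repaired, your proof goes through.
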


\begin{proof}
Let $C \in \Z_{> 0},$ the case of $C = 0$ being trivial. We first consider $\al = 2\del_p$ in the case of \S\ref{2.3}. Let $\beta$ be a positive even root.  Define a Shapovalov element, $\theta_{\beta,C} \in U(\mathfrak{b}^-)^{-C\beta}$, to be an element such that $\theta_{\beta,C}v^+ \in M(\nu)$ is singular for all $\nu$ such that $\langle \nu, h_{\beta} \rangle = C.$ Note that this is less restrictive than some other definitions (see \cite[\S4.12]{H} and \cite[Chapter 9]{M}).  The element of $U(\mathfrak{n}^-)$ which $\theta_{\beta,C}$ acts as on $M(\mu)$ is written as $\theta_{\beta,C}(\mu).$ Theorem \ref{3} shows the existence of a Shapovalov element $\theta_{2\del_m,C}$ for each positive integer $C$.

We will construct Shapovalov elements for the other $2\del_i$ by downward induction on $i$, following the proof of \cite[Theorem 9.2.6]{M}. Musson's argument itself follows a proof from \cite{S} in the case of semisimple Lie algebras.

Suppose that we have Shapovalov elements $\theta_{2\del_i,C}$ for a fixed $i$ and all $C \in \Z_{>0}.$  Write $\kappa = \del_{i-1} - \del_i,$ and $\mu = s_{\kappa}\nu.$  Suppose that $\langle \mu, h_{\kappa} \rangle = p \in \Z_{>0}$ and that $\langle \mu, h_{2\del_i} \rangle = C.$  We will show that there is a unique $\theta \in U(\mathfrak{n^-})^{-2C\del_{i-1}}$ such that 

\begin{equation}\label{4.1}
e_{-\kappa}^{p+2C}\theta_{2\del_i,C}(\mu) = \theta e_{-\kappa}^{p}.
\end{equation}

Write $L_{e_{-\kappa}}$ and $R_{e_{-\kappa}}$ for the endomorphisms of $U(\mathfrak{n}^-)$ given by left and right multiplication by $e_{-\kappa}.$  Because $\mathrm{ad}_{e_{-\kappa}}$ is locally nilpotent on $U(\mathfrak{n}^-)$, there exists a $k \in \Z_{\geq0}$ such that for all $l \in \Z_{\geq0}$

\begin{align*}
e_{-\kappa}^{l}\theta_{2\del_i,C}(\mu) &= L_{e_{-\kappa}}^{l}\theta_{2\del_i,C}(\mu) \\
&= (R_{e_{-\kappa}} + \mathrm{ad}_{e_{-\kappa}})^{l}\theta_{2\del_i,C}(\mu) \\
&= \sum_{i=0}^{l}\left( \begin{array}{c} l \\ i \end{array} \right) R_{e_{-\kappa}}^{l-i}(\mathrm{ad}_{e_{-\kappa}})^{i}\theta_{2\del_i,C}(\mu) \\
&= \sum_{i=0}^{k}\left( \begin{array}{c} l \\ i \end{array} \right)((\mathrm{ad}_{e_{-\kappa}})^{i}\theta_{2\del_i,C}(\mu))e_{-\kappa}^{l-i}.
\end{align*}

Hence, for sufficiently large $l$, $e_{-\kappa}^{l}\theta_{2\del_i,C}(\mu)v^+ \in U(\mathfrak{n}^-)e_{-\kappa}^{p}v^+ = M(\nu).$

By $\mathfrak{sl}_{2}$ relations,

\begin{align*}
e_{\kappa}e_{-\kappa}^{l}\theta_{2\del_i,C}(\mu)v^+ &= le_{-\kappa}^{l-1}(h_{\kappa} - l + 1)\theta_{2\del_i,C}(\mu)v^+ \\
&= l(p + 2C - l)e_{-\kappa}^{l-1}\theta_{2\del_i,C}(\mu)v^+.
\end{align*}

This implies that $e_{-\kappa}^{p + 2C}\theta_{2\del_i,C}(\mu)v^+ \in M(\nu).$  Since $M(\nu)$ is a free $U(\mathfrak{n}^-)$ module, there is a $\theta$ such that \eqref{4.1} holds, and because $e_{-\kappa}$ is not a zero divisor in $U(\mathfrak{n}^-),$ that $\theta$ is unique. Also, note that $\theta e_{-\kappa}^{p}v^+$ is singular.

Because, in \eqref{4.1}, $\theta_{2\del_i,C}$ is polynomial in $\mu$, and $\nu$ and $\mu$ are linearly related, $\theta$ is polynomial in $\nu$ such that $\langle \nu, h_{\kappa} \rangle$ is a negative integer, and $\langle \nu, h_{2\del_{i-1}} \rangle = C$.  As such $\nu$ form a Zariski dense subset of $\nu$ such that $\langle \nu, h_{2\del_{i-1}} \rangle = C$, the $\theta$ lift to a Shapovalov element $\theta_{2\del_{i-1},C} \in U(\mathfrak{b}^-),$ i.e. $\theta_{2\del_{i-1},C}(\nu) = \theta.$  This completes the induction and the proof of the theorem in the case of \S\ref{2.3}.

The proof of (1) in of the case of \S\ref{2.1} is similar, with $\del_i$ playing the role of $2\del_i$ and $\del_{i-1} -\del_i$ still playing the role of $\kappa.$

The proof of (2) in the case of \S\ref{2.2} is by downward induction on the index of $\ep_i$ ($\ep_i$ playing the role that $2\del_i$ played in the proof of (3)), the case $i=n$ following immediately from Theorem \ref{3}.  Moreover, $\ep_{i-1} - \ep_i$ plays the role of $\kappa$ here.

Finally, the proof of (4) in the case of \S\ref{2.4} is by downward induction on the sum of the indices of $\ep_i + \ep_j,$ ($\ep_i + \ep_j$ playing the role that $2\del_i$ played in the proof of (3)) the case of $i + j = 2n -1,$ following immediately from Theorem \ref{3}.  In this case $\ep_{i-1} - \ep_i$ or $\ep_{j-1} - \ep_j$ plays the role of $\kappa.$

Finally, (5) and (6) follow immediately from Theorem \ref{3}.
\end{proof}

\begin{rem}\label{4.2}
In the cases of \S\ref{2.1}, \S\ref{2.2} and \S\ref{2.3}, there remains a $W'$-orbit of positive nonisotropic roots not covered by Theorem \ref{4} or \cite[Corollary 9.2.7]{M} : $\al = \del_i + \del_j, 1 \leq i < j \leq m$ in the cases of \S\ref{2.1} and \S\ref{2.3} and $\al = \ep_k + \ep_l, 1 \leq k < l \leq n$ in the case of \S\ref{2.2}.  All other $W'$-orbits in the cases considered are covered by either of these results.
\end{rem}

\begin{rem}\label{4.3}
Retain the notation of Theorem \ref{4}.  In contrast to the semisimple Lie algebra setting, we do not know whether or not $\mathrm{dim}(\mathrm{Hom}(M(s_{\al}\mu), M(\mu))) = 1.$ Moreover, nonzero homomorphisms $M(s_{\al}\mu) \rightarrow M(\mu)$ are not necessarily injective (cf. \cite[Example 9.3.4]{M}).
\end{rem}

\begin{rem}
After Weiqiang Wang announced the results of this paper in an AMS meeting in 2017, Vera Serganova outlined to him an alternative approach of constructing singular vectors of a different form from ours using odd reflections. However, one would need to check, as we did for our candidate singular vectors in this paper, whether her candidate singular vectors are nonzero.  We were told that this odd reflection method is unable to handle the cases that we were unable to solve in this paper (see Remark \ref{4.2}). It should also be noted that Serganova's use of odd reflections resembles the approach of Musson's \cite[Corollary 9.3.6]{M} super generalization of Verma's theorem in the case of Verma modules of typical highest weight, as well as his approach to constructing singular vectors associated to isotropic odd roots \cite[Theorem 4.8]{M1}.
\end{rem}

{\bf Acknowledgement.} The author would like to thank his advisor, Weiqiang Wang, for formulating the conjecture on which this paper is based and for his discussions and advice. He would also like to thank the anonymous referee who pointed out the proof given in the second paragraph of the introduction using \cite{KK} and a density argument. This research was partially supported by NSF grant DMS-1702254.

\end{document}